\documentclass[11pt,reqno]{amsart}

\usepackage[utf8]{inputenc}

\usepackage{graphicx,amssymb,mathrsfs,amsmath,amsfonts,appendix, subfigure, amsthm, paralist, enumitem, comment, mathtools, esint, derivative, colonequals}
\usepackage[colorlinks=true, pdfstartview=FitV, linkcolor=blue, citecolor=blue, urlcolor=blue]{hyperref}
\usepackage{comment}
\usepackage{orcidlink}

\numberwithin{equation}{section}
\newtheorem{lemma}{Lemma}[section]
\newtheorem{theorem}{Theorem}[section]

\newtheorem{proposition}{Proposition}[section]

\newtheorem{corollary}{Corollary}[section]

\newcommand{\R}{\mathbb{R}}

\title[Propagation of chaoticity for the Motsch--Tadmor model]{On the uniform-in-time propagation of chaos for the Motsch--Tadmor model}

\author{T.~Kldiashvili\,}
\address{Institute of Mathematics and Scientific Computing, University of Graz, Heinrichstraße 36, 8010 Graz (Austria); De Vinci Higher Education, De Vinci Research Center, Paris (France)}
\email{tamari.kldiashvili@uni-graz.at}

\author{S.~Rossi\,\orcidlink{0000-0003-4305-3404}}
\address{Sapienza Università di Roma, Dipartimento di Matematica, Piazzale Aldo Moro 5, 00185  Rome (Italy)}
\email{stefano.rossi2@uniroma1.it}

\author{L.~Trussardi\,\orcidlink{0000-0001-6255-9983}}
\address{Institute of Mathematics and Scientific Computing, University of Graz, Heinrichstraße 36, 8010 Graz (Austria)}
\email{lara.trussardi@uni-graz.at}
\date{\today}

\begin{document}

\begin{abstract}
    We consider the Motsch--Tadmor model, a modification of the Cucker–Smale model for flocking dynamics that incorporates asymmetric communication weights. In the case of uniformly positive interactions, we consider the validity problem for the limiting kinetic equation, proving a uniform-in-time propagation of chaos result in the moving frame given by the consensus velocities. The result is obtained by studying the reversibility of the communication weights and by implicitly characterizing the asymptotic consensus. 
\end{abstract}
\maketitle

\section{Introduction}
In this article, we study the \emph{Motsch--Tadmor model}, an interacting particle system describing collective alignment and flocking behavior in multi-agent systems. The system reads as
\begin{equation}\label{eq:MTmodel}
\begin{cases}
\displaystyle
\dot X_i^N(t)=V_i^N(t),\\[8pt]
\displaystyle
\dot V_i^N(t)
=
\sum_{j=1}^N
\frac{
\varphi\bigl(|X_i^N(t)-X_j^N(t)|\bigr)
}{
\sum_{\ell=1}^N
\varphi\bigl(|X_i^N(t)-X_\ell^N(t)|\bigr)
}
\bigl(V_j^N(t)-V_i^N(t)\bigr),
\end{cases}
\quad i=1,\ldots,N
\end{equation}
where \(X_i^N(t),V_i^N(t)\in \R^d\) respectively denote the position and velocity of the \(i\)-th agent at time \(t\), and \(\varphi: \R_+ \to\R_+\) is a suitable regular \emph{influence function} characterizing the strength of the interaction between two agents as a function of their mutual distance.

This model was introduced by Motsch and Tadmor in~\cite{MT11} 
as a modification of the mean-field Cucker--Smale model (see~\cite{HT08,CS1,CS2}) 
to overcome the ``interaction-intensity problem''.
Namely, the fact that,
in the Cucker--Smale model in the mean-field scaling, 
each agent is influenced by every other agent of the population with an intensity of \(O(1/N)\) that depends on the size of the system. 
In~\cite{MT11}, to take into account the effect of nearby agents on the dynamics,
the authors  replaced the interaction weight $\varphi(|X_i-X_j|)/N$ between agents \(X_i\) and \(X_j\) with the so-called \emph{relative influence}, defined by 
\begin{equation}
\label{eq:relinf}
a_{ij}(\boldsymbol{X}^N):= \dfrac{\varphi(|X_i^N-X_j^N|)}{\sum_{\ell=1}^N \varphi(|X_i^N-X_\ell^N|)}, \qquad \boldsymbol{X}^N:=(X_1^N,\dots,X^N_N).
\end{equation}

As it is important to notice, the \emph{communication weights} \(a_{ij}\) are generally not symmetric. 
It follows that, unlike the Cucker--Smale model, the Motsch--Tadmor model does not, in general, conserve total momentum. 
As a consequence, the asymptotic consensus velocity
is not generally determined by the initial data through momentum conservation, but rather emerges from the collective dynamics of the system.
Moreover, the standard \(\ell^2\) energy estimates available for Cucker--Smale-type systems do not directly provide a dissipative estimate for the relative velocities, making the analysis of convergence towards consensus more challenging.

To overcome these difficulties, other approaches have been developed to investigate the long-time behavior for \eqref{eq:MTmodel}. In their original work \cite{MT11}, Motsch and Tadmor established exponential decay of the velocity diameter under suitable assumptions on the interaction, thereby proving convergence of the agents' velocities towards a common asymptotic value.

This strategy was later extended to the kinetic setting by Tan~\cite[Theorem 2.3 and Proposition 2.4]{Tan2014}, who obtained flocking estimates and existence of global regular solutions for the  kinetic version of the Motsch--Tadmor model under compactly-supported regular initial data $f_{\mathrm{in}}\in C^1_{\mathrm c}$. 
Further results can be found in~\cite{gao2017global, ha2026emergent}.
More recent developments concerning mono-cluster and multi-cluster flocking configurations, including time-discrete formulations of the  Motsch--Tadmor model, can be found in~\cite{HJZ25,Ha2026, HJLZ26}. We also refer to~\cite{HaLiu09,HT08,NP22,HaWang26} for related results on the Cucker--Smale model.

In this work, 
we exploit the detailed-balance structure 
of the communication weights showing that,
under suitable assumptions on the influence function,
the asymmetric communication matrix \(\{a_{ij}(\boldsymbol{X}^N)\}_{i,j=1}^N\) 
is reversible at each fixed configuration \(\boldsymbol{X}^N\) with respect to a suitable time-dependent distribution \(\{\pi^N_i(t)\}_{i=1}^N\). 

This point of view provides an implicit characterization of the consensus velocity \(\mathbf V_\infty^N\) for the discrete system, together with an analogous quantity at the kinetic level.
In Proposition \ref{thm1}, we characterize the consensus velocity as the limit as \(t \uparrow \infty\) of the weighted average of the velocities taken respect to the reversible distribution of the communication matrix.

The asymptotic consensus in non-symmetric systems has also already been investigated in the past. Boudin, Salvarani, and Trélat in~\cite{BoudinSalvaraniTrelat2022} considered linear first-order consensus dynamics with \emph{time-independent} interaction coefficients and established the existence of an invariant distribution, under suitable connectivity assumptions, and showed that the corresponding weighted average is conserved along the dynamics, allowing to explicitly identify the asymptotic consensus in terms of the initial data. Related results for flocking dynamics on weighted graphs were obtained by Cotil~\cite{Cotil23} through a probabilistic approach.
Concerning models with time-dependent coefficients, see e.g. \cite{BonnetPouradierDuteilSigalotti2022, BonnetWeillSigalotti2024}. 

Our main contribution uses the same detailed-balance structure to provide the correct \(\ell^2\) weighted norm for comparing the particle system and the kinetic equation corresponding to \eqref{eq:MTmodel}, thus establishing uniform-in-time propagation of chaos result. 
Recenlty, various works have been devoted to prove propagation of chaos results for consensus dynamics so that the kinetic limit holds uniformly-in-time, see e.g. \cite{GerberVaes2026,Ha2026, HaHoffmannKim2026}.
However, to the author's knowledge the uniform-in-time derivation for the Motsch--Tadmor is still an open question (see \cite{Ha2025, Tadmor2023, BBC20}). We address this issue in Theorem \ref{thm2} of the present work.

Of course other particle systems share the asymmetric features of the Motsch--Tadmor model and would merit a thorough study in the future. We mention here models with topological interactions, dynamical formulation for transformers, and more general non-exchangeable interacting systems, see e.g. \cite{DP19, BenedettoCagliotiRossi2021, DegondPulvirentiRossi2023, BenedettoPaulRossi2025, MenciPaulRossiTenna2026, DuerinckxGeshkovskiRossi2026, paul2026meanfield,PaulRossiTrelat2025}.

The structure of the paper is as follows: In Section \ref{sec2}, we introduce the main technical assumptions and present the two main theorems of this article. In Section \ref{sec3}, we characterize the asymptotic consensus velocity, while in Section \ref{sec4}, we prove the uniform-in-time propagation of chaos. Appendix \ref{sec:appendix} contains the proof of the consistency error estimate needed in the proof of the uniform-in-time propagation of chaos.

\section{Setting and main results}
\label{sec2}
We consider the Motsch-Tadmor model introduced in \eqref{eq:MTmodel}-\eqref{eq:relinf}.
Moreover, we assume that the influence function \(\varphi\) is such that
\(\varphi\in W^{1,\infty}(\R_+)\)
and that there exist \(0<m\le M\) such that
\begin{equation}
\label{in:lowupbound}
    0<m\leq \varphi(r)\leq M<\infty
\quad\text{for all }r\geq0.
\end{equation}
We again notice that the model \eqref{eq:MTmodel} is no longer symmetric, since in general $a_{ij} \neq a_{ji}$ for $i \neq j$, due to the relative influence denominator. 

Our first result describes the asymptotic velocity consensus reached by the agents of system \eqref{eq:MTmodel} and by the solutions of the related kinetic equation.
Under assumption \eqref{in:lowupbound}, it is known that exponential flocking holds in terms of the velocity diameter
\begin{equation}
\label{def:suppdv}
D^N_V(t):= \max_{i,j \in \{1,\dots, N\}}\left|V^N_i(t)-V_j^N(t) \right|.
\end{equation}
Indeed, since the communication weights satisfy
\[
N\, a_{ij}(\boldsymbol{X}^N)\geq \frac{m}{M},
\]
the diameter contraction estimate in \cite[Theorem 2.3]{MT14} yields
\begin{equation}
\label{in:asymptfl}
D^N_V(t)\le D^N_V(0) {\mathrm e}^{- \eta t}, \quad \eta:=\frac mM.
\end{equation}
This implies that the velocity diameter of the agents is contracting exponentially over time, thus demonstrating the existence of an asymptotic consensus velocity $\mathbf{V}^N_\infty$. Unlike in symmetric models, its value is not determined by the total momentum, which is not conserved in this case. 

A similar exponential flocking estimate holds at the kinetic level. 
Let us consider the empirical measure of a system with $N$ agents, given by
\begin{align}
    \mu^N_t(x,v) := \frac{1}{N}\sum\limits_{i=1}^N\delta_{X_i^N(t)}(x)\otimes\delta_{V^N_i(t)}(v)\in \mathcal{P}(\R^{d}\times \R^d),
\end{align}
where $\delta_x\otimes\delta_v$ is the Dirac delta centered in $(x,v) \in \R^d\times\R^d$. It is not difficult to show that the empirical measure \(\mu^N_t\) weakly solves the {non-linear} Motsch--Tadmor kinetic equation 
\begin{equation}
\label{eq:mesoVlasov}
\partial_t f_t(x,v)
+
v\cdot\nabla_x f_t(x,v)
+
\nabla_v\cdot\bigl(F[f_t]\,f_t(x,v)\bigr)
=0,
\end{equation}
where, for any \(t \geq 0\), we denote by \(f_t(x,v)\in \mathcal{P}(\R^{d}\times \R^d)\) the time-dependent probability distribution of agents in the phase-space \((x,v) \in \R^d \times \R^d\) and
\begin{equation}
\label{eq:meanforce}
F[f_t](x,v)
:=
\displaystyle
\int_{\R^{d}\times \R^d}
a^{\rho}(t,x,y)\, (w-v)\, {\rm d}f_t(y,w)
, \quad a^{\rho}(t,x,y):= \dfrac{\varphi(|x-y|) }{\displaystyle \int_{\R^{d}} \varphi(|x-\bar y|) \mathrm d\rho_t(\bar y)},
\end{equation}
where \(\rho_t(x):= \int_{\R^d_v} f_t(x,\mathrm dv) \).
Notice that the lower bound in~\eqref{in:lowupbound} implies a uniform non-degeneracy of the denominators in~\eqref{eq:meanforce}. 

We mention that existence results for solutions to the kinetic equation~\eqref{eq:mesoVlasov} are present in the literature, see e.g. Tan~\cite{Tan2014} for regular solutions and Karper, Mellet and Trivisa~\cite{KMT13} for weak solutions. 
In the following, we will work with compactly supported regular solutions \(f_t \in C^1_{\mathrm c}(\R^d \times \R^d)\) to \eqref{eq:mesoVlasov}.

Introducing the velocity diameter
\begin{equation}
\label{def:suppdvkin}
D^f_V(t):= \sup_{(x,v),(y,w)\in \mathrm{supp} f_t}\left|v-w \right|
\end{equation}
and following the contraction argument in \cite[Proposition 2.1]{Tan2014}, 
it can be shown that exponential flocking holds in terms of \(D_V^f\).  
Indeed, although the result in \cite{Tan2014} is stated for non-increasing functions \(\varphi\),
the same argument applies
under assumption \eqref{in:lowupbound}, since
\[
a^\rho(t,x,y) \geq \frac{m}{M}.
\]
This implies that
\begin{equation}
\label{in:asymptflkin}
D^f_V(t)\le D^f_V(0) {\mathrm e}^{- \eta t}, \quad \eta:=\frac mM,
\end{equation}
thus demonstrating the existence of an asymptotic consensus velocity $\mathbf{v}^f_\infty$.

By introducing the distributions 
\begin{equation}
\label{def:eqmeas}
\pi^N_i(t):=\dfrac{\sum_{\ell}\varphi(|X_i^N(t)-X_\ell^N(t)|)}{\sum_{\ell,j}\varphi(|X^N_\ell(t)-X^N_j(t)|)}, \quad 
\pi^\rho(t,x):= \dfrac{\int\varphi(|x-y|)\mathrm{d}{\rho}_t(y)}{\int \varphi(|\bar x-\bar y|)\mathrm{d}\rho_t(\bar y) \mathrm d\rho_t(\bar x)},
\end{equation}
we prove that (see Section \ref{sec3}) the weights \(\{a_{ij}(\boldsymbol{X}^N(t))\}\) and \(a^{\rho}(t,x,y)\) are reversible w.r.t. these expressions. This implies the following characterizations of the limit \(\mathbf{V}^N_\infty\) and \(\mathbf{v}^f_\infty\).
\begin{proposition}[Velocity consensus]
\label{thm1}
    Consider the Motsch--Tadmor model \eqref{eq:MTmodel} with initial data \(\{(X_i^N(0), V_i^N(0))\}_{i=1}^N\) and assume that the influence function \(\varphi\) satisfies the assumption in \eqref{in:lowupbound}. Introducing the quantity
    \begin{equation}
    \label{eq:weightedN}
    \langle \boldsymbol{V}^N(t)\rangle_{N}:= \sum_{i=1}^N \pi_i^N(t) V_i^N(t),
    \end{equation}
    where \(\{\pi_i^N(t)\}_{i=1}^N\) is given in \eqref{def:eqmeas}, there exists \(\mathbf{V}^N_\infty\in \R^d\), characterized in \eqref{eq:vinf}, such that \( \langle \boldsymbol{V}^N(t)\rangle _{N} \to \mathbf{V}^N_\infty\) as \(t \uparrow \infty\). Moreover, there exists \(C>0\) such that
    \[
    \max_{i\in \{1,\dots,N\}}\left|V_i^N(t)-\mathbf{V}^N_\infty\right|\le C \, \mathrm e^{-\eta t}, \quad \eta:= \frac mM.
    \]
    Similarly, consider a regular solution \(f_t(x,v)\) of the Motsch--Tadmor model \eqref{eq:mesoVlasov} with compactly supported initial datum \(f_{\mathrm{in}} \in C^1_{\mathrm c}(\R^d \times \R^d)\)
    and introduce the quantity
    \begin{equation}
    \label{eq:weightedf}
    \langle v(t)\rangle_{f}:= \displaystyle \int_{\R^d\times \R^d}\pi^\rho(t,x) v\,  \mathrm df_t(x,v),
    \end{equation}
    where \(\pi^\rho(t,x)\) is given in \eqref{def:eqmeas}. Then, there exists \(\mathbf{v}^f_\infty\in \R^d\), characterized in \eqref{eq:vinff}, such that \( \langle v(t)\rangle _{f} \to \mathbf{v}^f_\infty\) as \(t \uparrow \infty\). Moreover, there exists \(C>0\), such that
    \[
    \left|v-\mathbf{v}^f_\infty\right|\le C \, \mathrm e^{-\eta t}, \quad \eta:= \frac mM,
    \]
    for any \(v \in \text{supp}f_t\).
\end{proposition}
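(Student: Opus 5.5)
The plan is to combine the exponential contraction of the velocity diameter \eqref{in:asymptfl} (resp. \eqref{in:asymptflkin}) with a direct computation of the time derivative of the weighted average \(\langle \boldsymbol{V}^N(t)\rangle_N\), whose drift turns out to be controlled by the diameter.

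\emph{Reversibility.} First we check the reversibility of the weights. Set \(S_i:=\sum_\ell \varphi(|X_i^N-X_\ell^N|)\) and \(Z:=\sum_i S_i\), so that \(\pi_i^N=S_i/Z\). Then
\[
\pi_i^N a_{ij}=\varphi(|X_i^N-X_j^N|)/Z,
\]
which is symmetric in \(i,j\). In the same way, \(\pi^\rho(t,x)a^\rho(t,x,y)=\varphi(|x-y|)/Z_\rho\), where \(Z_\rho\) is the double integral in \eqref{def:eqmeas}. As a consequence, the interaction contribution to \(\frac{d}{dt}\langle\boldsymbol V^N\rangle_N\) vanishes:
\[
\sum_i \pi_i^N\sum_j a_{ij}(V_j-V_i)=\frac1Z\sum_{i,j}\varphi_{ij}(V_j-V_i)=0
\]
by antisymmetry. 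Hence
\[
\frac{d}{dt}\langle \boldsymbol V^N\rangle_N=\sum_i \dot\pi_i^N V_i^N .
\]

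\emph{Bounding the remaining term.} Since \(\sum_i \pi_i^N=1\), we have \(\sum_i\dot\pi_i^N=0\). Therefore, for any fixed index \(k\),
\[
\sum_i\dot\pi_i^N V_i=\sum_i \dot\pi_i^N (V_i-V_k),
\]
and so \(\bigl|\tfrac{d}{dt}\langle \boldsymbol V^N\rangle_N\bigr|\le D_V^N(t)\sum_i|\dot\pi_i^N|\). We then bound \(\dot \pi_i^N\) using \(\varphi\in W^{1,\infty}\). We have
\[
|\dot S_i|\le \|\varphi'\|_\infty\sum_\ell|V_i-V_\ell|\le N\|\varphi'\|_\infty D_V^N,
\]
and \(|\dot Z|\le N^2\|\varphi'\|_\infty D_V^N\). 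Together with \(S_i\le NM\), \(Z\ge N^2 m\), and \(\pi_i^N\le M/(Nm)\), this gives \(\sum_i|\dot\pi_i^N|\le C(m,M,\|\varphi'\|_\infty)D_V^N(t)\), with \(C\) independent of \(N\). Consequently
\[
\Bigl|\tfrac{d}{dt}\langle \boldsymbol V^N\rangle_N\Bigr|\le C\,D_V^N(0)^2\,\mathrm e^{-2\eta t},
\]
which is integrable on \((0,\infty)\). So \(\langle\boldsymbol V^N(t)\rangle_N\) converges, and we define
\[
\mathbf V^N_\infty:=\langle\boldsymbol V^N(0)\rangle_N+\int_0^\infty\sum_i\dot\pi_i^N V_i^N\,\mathrm ds,
\]
which is the characterization \eqref{eq:vinf}. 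The tail integral gives \(|\langle\boldsymbol V^N(t)\rangle_N-\mathbf V^N_\infty|\lesssim \mathrm e^{-2\eta t}\). Moreover, \(\langle\boldsymbol V^N(t)\rangle_N\) is a convex combination of the \(V_j\), so \(|V_i-\langle\boldsymbol V^N\rangle_N|\le D_V^N(t)\). The triangle inequality then yields the \(\mathrm e^{-\eta t}\) bound for \(\max_i|V_i^N(t)-\mathbf V^N_\infty|\).

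\emph{Kinetic case.} The kinetic case follows the same scheme, testing the equation \eqref{eq:mesoVlasov} against \(\pi^\rho(t,x)v\). The transport term produces \(\int v\,(\partial_t\pi^\rho+v\cdot\nabla_x\pi^\rho)\,\mathrm df_t\), where \(\partial_t\pi^\rho\) comes from the evolution of \(\rho_t\). The force term gives \(\int\pi^\rho F[f_t]\,\mathrm df_t=0\) by the symmetry of \(\pi^\rho a^\rho\). To handle the drift we use \(\frac{d}{dt}\int\pi^\rho\,\mathrm d\rho_t=0\), which lets us subtract a fixed velocity \(w_0\in\mathrm{supp}f_t\). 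The time derivative of \(\int\varphi(|x-y|)\,\mathrm d\rho_t(y)\) is \(\int\nabla\varphi(x-y)\cdot(v-w)\,\mathrm df_t\,\mathrm df_t\), hence bounded by \(\|\varphi'\|_\infty D_V^f\). The regularity and compact support of \(f_t\) (from \cite{Tan2014}) justify these manipulations.

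\emph{Main obstacle.} The main obstacle is the bookkeeping in these derivative estimates, especially in the kinetic case. There the material derivative of \(\pi^\rho\) along characteristics mixes the \(x\)-derivative (\(v\cdot\nabla_x\pi^\rho\)) with the time derivative through \(\rho_t\). One must check that the whole expression is a mean-zero quantity multiplied by velocity differences, so that it is bounded by \(C(D_V^f)^2\). My first attempt would be to write everything in Lagrangian form along the flow, as \(\frac{d}{dt}\pi^\rho(t,X(t))\), and mimic the discrete computation term by term.
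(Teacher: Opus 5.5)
Your proposal is correct and follows essentially the same route as the paper: the detailed-balance identity \(\pi_i^N a_{ij}=\varphi(|X_i^N-X_j^N|)/Z\) kills the interaction term, \(\sum_i\dot\pi_i^N=0\) lets you subtract a reference velocity, and the bound \(\sum_i|\dot\pi_i^N|\le \frac{2\,\mathrm{Lip}(\varphi)}{m}D_V^N(t)\) makes the drift of \(\langle \boldsymbol{V}^N(t)\rangle_N\) of order \([D_V^N(t)]^2\lesssim \mathrm e^{-2\eta t}\); the kinetic case is treated identically through the material derivative \(\partial_t\pi^\rho+v\cdot\nabla_x\pi^\rho\) and the conservation \(\int\pi^\rho\,\mathrm d\rho_t=1\). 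The only differences are cosmetic: you subtract a fixed particle velocity \(V_k^N\) (resp.\ \(w_0\)) rather than the weighted mean, and you bound \(\sum_i|\dot\pi_i^N|\) directly rather than \(\max_i|\dot\pi_i^N/\pi_i^N|\).
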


The previous result is the key ingredient to prove our propagation of chaos result. Given the kinetic equation~\eqref{eq:mesoVlasov}, we aim to prove the uniform-in-time validity of the kinetic model starting from the particle system. 
For $N \in \mathbb{N}_{>0}$,
let $\Phi_t^N$ denote the particle-flow of system \eqref{eq:MTmodel} and, 
given $F^N_{\text{in}} \in \mathcal{P}(\R^{2dN})$, 
consider the push-forward measure \(F^N_t:=(\Phi^N_t)_\# F^N_{\text{in}}\), weak solution to the \emph{linear Liouville} equation 
\begin{equation}
    \label{linearLiouville}
\begin{aligned}
    &\partial_t F^N_t(X_N,V_N) + \sum_{i=1}^N v_i \cdot\nabla_{x_i}F^N_t(X_N,V_N)\\
    &+ \frac1 N\sum_{i=1}^N\nabla_{v_i} \cdot \left(\sum_{j=1}^N \frac{\varphi(|x_i-x_j|)}{\frac{1}{N}\sum_{h=1}^N\varphi(|x_h-x_i|)}(v_j-v_i) F^N_t(X_N,V_N) \right)=0,
\end{aligned}
\end{equation}
where \(X_N:=(x_1,\dots,x_N)\) and \(V_N:=(v_1,\dots,v_N)\).
We then define the marginal distributions 
\begin{equation}
\label{def:kmarg}
F^{N}_{k,t}(X_k, V_k):= \int\limits_{\R^{2d(N-k)}}F^N_t(X_N,V_N)\,dx_{k+1}\,dv_{k+1}\,\dots\,dx_N\,dv_N, \quad 1\le k \le N-1
\end{equation}
and we assume that chaos holds at time zero, i.e. the initial data for \eqref{linearLiouville} is of the form $F^N_{\text{in}}:=f_{\text{in}}^{\otimes N}$ where $f_{\text{in}}$ is the initial datum of the kinetic equation~\eqref{eq:mesoVlasov}. 

For a fixed group of \(k<N\) particles, we expect that chaos is propagated at positive times $t$ as \(N \uparrow \infty\), i.e. weakly $\lim_{N \uparrow \infty} F^{N}_{k,t}=f^{\otimes k}_t$ where $f_t$ is solution to \eqref{eq:mesoVlasov} with initial datum $f_{\text{in}}$. 
However,
a uniform-in-time estimate for the absolute positions cannot be expected
without removing the collective motion of the flock. 
For this reason, we obtain our uniform-in-time propagation of chaos result by passing to the moving frame determined by the consensus velocities of the two systems. Namely, we will compare
\begin{equation}
\label{eq:movingframe1}
\bigl(
X_i^N(t)-t\mathbf V_\infty^N,\,
V_i^N(t)-\mathbf V_\infty^N
\bigr)
\quad 
\text{and}
\quad
\bigl(
\bar X_i(t)-t\mathbf v^f_\infty,\,
\bar V_i(t)-\mathbf v^f_\infty
\bigr),
\end{equation}
where \(\{X_i^N(t), V_i^N(t)\}_{i=1}^N\) are given in \eqref{eq:MTmodel} and \(\{\bar X_i(t), \bar V_i(t)\}_{i=1}^N\) are the positions and velocities of \(N\) independent particles evolving through the mean-field interaction term \eqref{eq:meanforce}; see also \eqref{eq:interdyn}.
The initial data for \(\{X_i^N(t), V_i^N(t)\}_{i=1}^N\) and \(\{\bar X_i(t), \bar V_i(t)\}_{i=1}^N\) in \eqref{eq:movingframe1} are \((X_N, V_N) \in \mathrm{supp} (f_{\mathrm{in}}^{\otimes N})\).

 To quantify the discrepancy between two probability measures $\mu, \nu$ on $\R^m$  with finite second moments, we use the standard \(2\)-Wasserstein distance
  \begin{align*}
        \mathcal{W}_2(\mu,\nu)^2 = \inf\limits_{\gamma \in \Gamma(\mu,\nu)}\int\limits_{\R^m\times\R^m} |x-y|^2 \, {\mathrm d\gamma}(x,y),
    \end{align*}
where $\Gamma(\mu,\nu)$ denote the set of couplings between $\mu$ and $\nu$, i.e. the set of probability densities on $\R^m\times\R^m$ whose marginals are $\mu$ and $\nu$.

In the following theorem, we prove that the interacting particle system is uniformly chaotic in time with respect to the moving frame compatible with the consensus velocities, with mean-field rate $N^{-1/2}$ for every fixed $k$.

\begin{theorem}[Uniform-in-time propagation of chaos]
\label{thm2}
Assume \eqref{in:lowupbound}, and suppose \(f_{\mathrm{in}}\in C_{\mathrm c}^{1}(\R^{d}\times \R^d)\). 
Let \(f_t\) be the regular solution of the kinetic Motsch--Tadmor model \eqref{eq:mesoVlasov} with initial datum \(f_{\mathrm{in}}\) and let \(F_{t}^N\) be the Liouville solution to \eqref{linearLiouville} with initial datum \(F^N_{\mathrm{in}}=f_{\mathrm{in}}^{\otimes N}\).

Then there exists a constant \(C>0\), depending only on \(d\), \(m\), \(M\), \(\|\varphi\|_{\mathrm{Lip}}\), and the diameter of the velocity support of \(f_{\mathrm{in}}\), such that
\begin{equation}
\label{in:propchaos}
\sup_{t\geq0}\,
\mathcal{W}_2\left(
\widetilde F_{k,t}^N,
\widetilde f_t^{\otimes k}
\right)
\leq
C\sqrt{\frac{k}{N}},
\end{equation}
where
\(\widetilde f_t:= (\mathcal{G}_t)_\# f_t\)
and, for \(1\leq k\leq N\), 
\(\widetilde F_{k,t}^N\) is the \(k\)-particle marginal of 
\((\mathcal{G}_t^N)_{\#}F^N_{t}\) where
$
\mathcal G_t(x,v)
:=
\left(
x-t\mathbf v^f_\infty,\,
v-\mathbf v^f_\infty
\right)
$ and
\begin{equation}
    \label{eq:movingframe}
    \mathcal{G}^N_{t}(X_N,V_N):=(x_1-t \mathbf V_\infty^N,\dots, x_N -t \mathbf V_\infty^N, v_1- \mathbf V^N_\infty, \dots, v_N-\mathbf V_\infty^N).
\end{equation}
\end{theorem}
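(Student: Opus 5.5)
We use a synchronous coupling in the spirit of Sznitman. Take $(X_N,V_N)$ distributed according to $f_{\mathrm{in}}^{\otimes N}$. From this single initial datum we run two dynamics: the particle system \eqref{eq:MTmodel}, giving $(X_i^N(t),V_i^N(t))$, and $N$ independent copies of the mean-field characteristics
\[
\dot{\bar X}_i=\bar V_i,\qquad \dot{\bar V}_i=F[f_t](\bar X_i,\bar V_i),
\]
which are driven by the fixed solution $f_t$. Each $(\bar X_i(t),\bar V_i(t))$ then has law $f_t$, and the family is independent. Pushing the joint law forward by $\mathcal G^N_t\times\mathcal G_t^{\otimes N}$ gives a coupling of $(\mathcal G^N_t)_\#F^N_t$ and $\widetilde f_t^{\otimes N}$. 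By exchangeability,
\[
\mathcal W_2(\widetilde F^N_{k,t},\widetilde f_t^{\otimes k})^2\le k\,\mathbb E\big[|\widetilde X_1-\widetilde{\bar X}_1|^2+|\widetilde V_1-\widetilde{\bar V}_1|^2\big].
\]
So it suffices to prove
\[
\frac1N\sum_i\mathbb E\big[|\widetilde X_i-\widetilde{\bar X}_i|^2+|\widetilde V_i-\widetilde{\bar V}_i|^2\big]\le C/N
\]
uniformly in $t$, where the tildes denote the moving-frame variables from \eqref{eq:movingframe1}.

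\textbf{Velocities.} By Proposition \ref{thm1}, every $V_i^N-\mathbf V^N_\infty$ and every $\bar V_i-\mathbf v^f_\infty$ is bounded by $C\mathrm e^{-\eta t}$; the constant depends only on the initial velocity diameter, which is controlled by the support of $f_{\mathrm{in}}$. This gives a bound on $|\widetilde V_i-\widetilde{\bar V}_i|$, but only an $O(\mathrm e^{-\eta t})$ bound, not $O(N^{-1/2})$. To obtain the $N$-rate we need a differential inequality.

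We will use the weighted energy $\mathcal E(t)=\sum_i\pi^N_i(t)\,\mathbb E|\widetilde V_i-\widetilde{\bar V}_i|^2$, together with the analogous position term. By the bounds \eqref{in:lowupbound}, the weights $N\pi^N_i$ lie in $[m/M,M/m]$, so $\mathcal E$ is equivalent to the unweighted norm. Reversibility, $\pi_i a_{ij}=\pi_j a_{ji}$ (Section \ref{sec3}), makes the alignment term dissipative in this weighted norm. Concretely, $\sum_{i,j}\pi_ia_{ij}(e_j-e_i)\cdot e_i=-\tfrac12\sum_{i,j}\pi_ia_{ij}|e_i-e_j|^2$.

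We then split $\frac{d}{dt}\mathcal E$ into four contributions:
\begin{itemize}
\item the dissipative alignment term for the error vector $e_i=\widetilde V_i-\widetilde{\bar V}_i$;
\item a Lipschitz-in-position term, coming from $a_{ij}(\boldsymbol X^N)$ versus $a_{ij}(\bar{\boldsymbol X})$; it is bounded by $\|\varphi\|_{\mathrm{Lip}}$ times the position error times the velocity diameter, hence carries a factor $\mathrm e^{-\eta t}$;
\item the term from $\dot\pi_i$, which is also bounded by $C\,|\dot X|$-differences, hence by $\mathrm e^{-\eta t}$;
\item the consistency error $\mathbb E|F^N(\bar{\boldsymbol X},\bar{\boldsymbol V})_i-F[f_t](\bar X_i,\bar V_i)|^2\le C\mathrm e^{-2\eta t}/N$, from the appendix. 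This is the law of large numbers for independent variables applied to both the numerator and the denominator. The velocities enter only through differences $w-v$, which are bounded by the diameter $C\mathrm e^{-\eta t}$.
\end{itemize}

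The main obstacle is that the dissipation acts only on differences $e_i-e_j$ and not on the mean of $e$. The $\pi$-weighted mean of the error is exactly the discrepancy between $\langle\boldsymbol V^N\rangle_N$ and $\langle v\rangle_f$, and it drifts. Moreover the position error $\widetilde X_i-\widetilde{\bar X}_i$ contains $\int_0^t e_i\,ds$, which could grow linearly.

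We handle this by working with the fluctuation $e_i-\sum_j\pi_je_j$, for which a weighted Poincaré inequality holds with constant $\eta$; this follows from the lower bound $a_{ij}\ge m/(MN)$. We control the mean separately. Its time derivative consists only of the $\dot\pi$ term and the consistency terms, and both are $O(\mathrm e^{-\eta t}N^{-1/2})$ in $L^2$. Hence the mean converges, and its limit is precisely $\mathbf V^N_\infty-\mathbf v^f_\infty$. Subtracting the consensus velocities, which is exactly the role of the moving frame, the moving-frame mean error is therefore $\int_t^\infty O(\mathrm e^{-\eta s}N^{-1/2})\,ds$. This is integrable, so after integration the position error stays bounded by $CN^{-1/2}$.

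Gronwall's lemma then applies to the combined quantity (fluctuation energy, mean error, and position error). The position feedback enters only with the integrable factor $\mathrm e^{-\eta t}$, which yields a uniform-in-time bound $C/N$. The remaining risk in the plan is this closing step: the integrability of the mean error requires the identification of $\mathbf V^N_\infty$ from \eqref{eq:vinf}, and that identification must be made with an $L^2$ rate.
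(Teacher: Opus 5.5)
Your ingredients are the same as the paper's: synchronous coupling with $N$ independent mean-field characteristics, a $\pi^N$-weighted energy made dissipative by detailed balance, the law-of-large-numbers consistency bound, separate control of the weighted mean with limit $\Delta\mathbf V_\infty=\mathbf V^N_\infty-\mathbf v^f_\infty$ obtained by tail integration, and integration of the velocity error to control positions. The closing step, as you state it, does not work. A forward Gr\"onwall argument on the triple (fluctuation energy, mean error, moving-frame position error) would not close. At $t=0$ the moving-frame mean error equals $\langle\delta\boldsymbol V(0)\rangle_N-\Delta\mathbf V_\infty=-\Delta\mathbf V_\infty$, and showing this is $O(N^{-1/2})$ is exactly what you are trying to prove. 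Your claim that the mean's derivative is $O(\mathrm e^{-\eta t}N^{-1/2})$ is also only available a posteriori. That derivative is $\sum_i\dot\pi^N_i(\delta V_i-\langle\delta\boldsymbol V\rangle_N)+\sum_i\pi^N_i\mathcal E^X_i+\sum_i\pi^N_i\mathcal E^{\mathrm c}_i$; you omitted the middle, position-Lipschitz term. The first two pieces are bounded by $C\mathrm e^{-\eta t}(\sqrt{\mathcal V_N}+\sqrt{\mathcal X_N})$, so they are small only once the fluctuations are known to be small.

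The missing observation is that the fluctuation subsystem closes by itself. The difference $a_{ij}(\boldsymbol X^N)-a_{ij}(\bar{\boldsymbol X})$ depends on positions only through $\delta X_i-\delta X_j$, and $\dot\pi^N_i/\pi^N_i$ contains no error terms. Hence the $\pi$-weighted variances $\mathcal X_N$ and $\mathcal V_N$ of $\delta X_i$ and $\delta V_i$ satisfy a closed differential system with zero initial data, with no reference to the mean or to $\Delta\mathbf V_\infty$.

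Position fluctuations also do not grow linearly. Take $\mathcal H_N=\sqrt{\mathcal X_N}+\eta^{-2}\sqrt{\mathcal V_N}$. The transport term $+\sqrt{\mathcal V_N}$ in $\frac{\mathrm d}{\mathrm dt}\sqrt{\mathcal X_N}$ is cancelled by the dissipation $-\eta^2\sqrt{\mathcal V_N}$. This gives $\frac{\mathrm d}{\mathrm dt}\mathcal H_N\le C\mathrm e^{-\eta t}\mathcal H_N+C\zeta_N$ with $\|\zeta_N(t)\|_{L^2}\le C\mathrm e^{-\eta t}N^{-1/2}$, hence $\sup_t\|\mathcal H_N(t)\|_{L^2}\le CN^{-1/2}$.

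Only after this is the mean's derivative bounded and integrated over $[t,\infty)$. The moving-frame position error is then obtained by integrating $\delta V_i-\Delta\mathbf V_\infty$ in time; it is never fed back into a Gr\"onwall estimate. The risk you flag at the end is not a real one. The convergence $\langle\delta\boldsymbol V(t)\rangle_N\to\Delta\mathbf V_\infty$ holds pointwise by Proposition \ref{thm1}, and the $L^2$ rate follows from Minkowski's inequality on the tail integral. Note also that $\langle\delta\boldsymbol V\rangle_N=\langle\boldsymbol V^N\rangle_N-\sum_i\pi^N_i\bar V_i$, not $\langle\boldsymbol V^N\rangle_N-\langle v\rangle_f$.
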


Two corollaries easily follow by Proposition \ref{thm1} and Theorem \ref{thm2}. The first characterizes asymptotic flocking at the Eulerian level of the solutions of the Liouville equation \eqref{linearLiouville} and the mesoscopic equation \eqref{eq:mesoVlasov}. In particular, along the moving frame introduced in Theorem \ref{thm2}, the Liouville and Vlasov solutions converge to tensorized measures with Dirac deltas in the velocities.

\begin{corollary}[Asymptotic flocking in the moving frame]
\label{cor1}
Assume \eqref{in:lowupbound}, and let \(f_{\mathrm in}\in \mathcal C^1_{\mathrm c}(\R^d\times\R^d)\) be compactly supported. 
Let \(f_t\) be the solution to \eqref{eq:mesoVlasov}, and let
\(
\widetilde f_t=(\mathcal G_t)_\# f_t.
\)
Then there exist \(\rho_\infty\in\mathcal P(\R^d)\) and a constant \(C>0\) such that
\begin{equation}
\label{eq:kinetic-asy}
\mathcal W_2\left(
\widetilde f_t,\,
\rho_\infty\otimes\delta_0
\right)
\leq
C\,\mathrm e^{-\eta t}.
\qquad
\eta=\frac mM,
\end{equation}
Similarly, let
\(
F_t^N=(\Phi_t^N)_\#F_{\rm in}^N
\)
be a solution of the Liouville equation \eqref{linearLiouville}, and let
\(
\widetilde F_{k,t}^N
\) be the \(k\)-th marginal of 
\((\mathcal G_t^N)_\#F_t^N
\)
where
\(
\mathcal G_t^N(X_N,V_N)
\)
is defined in \eqref{eq:movingframe}.
Then there exist
\(
\rho_{k,\infty}^N
\in\mathcal P(\R^{dk})
\)
and a constant \(C>0\) such that
\begin{equation}
\label{eq:particle-asy}
\mathcal W_2\left(
\widetilde F_{k,t}^N,\,
\rho_{k,\infty}^N\otimes\delta_0^{\otimes k}
\right)
\leq
C\,\sqrt k\,\mathrm e^{-\eta t}.
\end{equation}
\end{corollary}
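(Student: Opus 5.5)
The plan is to work at the level of characteristics, because Proposition~\ref{thm1} controls every trajectory uniformly. From it we get the pointwise decay $|V-\mathbf v^f_\infty|\le C\mathrm e^{-\eta t}$ in the moving frame. Integrating this bound in time shows that the shifted positions converge. The limit positions then define $\rho_\infty$, and an explicit coupling gives the $\mathcal W_2$ bound. The particle case is handled the same way, configuration by configuration.

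\emph{Kinetic part.} Since $f_t$ is a regular solution with compact support, $f_t=(Z_t)_\# f_{\mathrm{in}}$, where $Z_t(x,v)=(X(t;x,v),V(t;x,v))$ is the characteristic flow of \eqref{eq:mesoVlasov}. Then $\widetilde f_t=(\mathcal G_t\circ Z_t)_\# f_{\mathrm{in}}$. Set $\widetilde X(t)=X(t)-t\mathbf v^f_\infty$ and $\widetilde V(t)=V(t)-\mathbf v^f_\infty$. By Proposition~\ref{thm1}, $|\widetilde V(t)|\le C\mathrm e^{-\eta t}$ uniformly on $\mathrm{supp} f_{\mathrm{in}}$, since $(X(t),V(t))\in\mathrm{supp} f_t$. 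Because $\dot{\widetilde X}=\widetilde V$ is integrable on $[0,\infty)$, the limit $X_\infty(x,v):=\lim_{t\to\infty}\widetilde X(t;x,v)$ exists, and
\[
|\widetilde X(t;x,v)-X_\infty(x,v)|\le \int_t^\infty C\mathrm e^{-\eta s}\,\mathrm ds=\frac C\eta\,\mathrm e^{-\eta t}.
\]
The map $X_\infty$ is a pointwise limit of continuous maps, hence Borel. We define $\rho_\infty:=(X_\infty)_\# f_{\mathrm{in}}$ and use the coupling $\gamma_t:=\bigl(\mathcal G_t\circ Z_t,\,(X_\infty,0)\bigr)_\# f_{\mathrm{in}}\in\Gamma(\widetilde f_t,\rho_\infty\otimes\delta_0)$. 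This gives
\[
\mathcal W_2^2\le \int\bigl(|\widetilde X-X_\infty|^2+|\widetilde V|^2\bigr)\,\mathrm df_{\mathrm{in}}\le C^2\bigl(1+\eta^{-2}\bigr)\mathrm e^{-2\eta t},
\]
which is \eqref{eq:kinetic-asy}.

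\emph{Particle part.} The argument is identical, applied to the flow $\Phi^N_t$ starting from $Z\in\mathrm{supp} f_{\mathrm{in}}^{\otimes N}$.

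\textbf{Main point to check: a constant uniform in the initial configuration.} Proposition~\ref{thm1} gives a constant $C$ that could a priori depend on the configuration $Z$, so I must make it uniform. The velocity equation is a convex-combination dynamics, so each $V_i^N(t)$ stays in the convex hull of the initial velocities. As a limit of convex combinations, $\mathbf V^N_\infty$ lies in the closed convex hull of $\{V_j^N(t)\}_j$ for every $t$. Hence, by \eqref{in:asymptfl},
\[
|V_i^N(t)-\mathbf V^N_\infty|\le D^N_V(t)\le D\,\mathrm e^{-\eta t},
\]
where $D$ is the velocity diameter of $\mathrm{supp} f_{\mathrm{in}}$. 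The same convex-hull argument, combined with \eqref{in:asymptflkin}, gives the analogous uniform constant in the kinetic case.

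\emph{Completing the particle bound.} Let $X^N_{\infty,i}(Z):=\lim_{t\to\infty}\bigl(X_i^N(t)-t\mathbf V^N_\infty\bigr)$ and $\rho^N_{\infty}:=\bigl(X^N_{\infty,1},\dots,X^N_{\infty,N}\bigr)_\# f_{\mathrm{in}}^{\otimes N}$. Take $\rho^N_{k,\infty}$ to be its $k$-marginal. Projecting the natural coupling of $(\mathcal G^N_t)_\#F^N_t$ and $\rho^N_\infty\otimes\delta_0^{\otimes N}$ onto the first $k$ particle slots gives a coupling of $\widetilde F^N_{k,t}$ and $\rho^N_{k,\infty}\otimes\delta_0^{\otimes k}$. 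Its cost is a sum of $k$ terms, each bounded by $D^2(1+\eta^{-2})\mathrm e^{-2\eta t}$. Taking square roots yields the factor $\sqrt k$ in \eqref{eq:particle-asy}.
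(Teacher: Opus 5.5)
Your proposal is correct and follows essentially the same route as the paper: you integrate the exponential velocity decay from Proposition~\ref{thm1} along trajectories to get limit positions in the moving frame, define the limit law as the push-forward of the initial datum under this limit map, and bound $\mathcal W_2$ by the trajectory-wise coupling. The paper writes out only the particle case, for the full $N$-particle law, and takes uniformity of the constant from the explicit constant $D^N_V(0)+\mathrm{Lip}(\varphi)[D^N_V(0)]^2/(\eta m)$ of Proposition~\ref{thm1} together with $D^N_V(0)\le D^f_V(0)$; you additionally treat the kinetic case, project the coupling onto $k$ slots, and obtain a slightly sharper uniform constant through your convex-hull argument.
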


\begin{proof}
We give a proof for the discrete case.
By Proposition~\ref{thm1}, for any \(i=1,\ldots,N\) there exists a constant \(C>0\) such that
\[
\left|
V_i^N(t)-\mathbf V_\infty^N
\right|
\leq
C\,\mathrm e^{-\eta t}.
\]
Since
\[
\frac{\mathrm d}{\mathrm dt}
\left(
X_i^N(t)-t\mathbf V_\infty^N
\right)
=
V_i^N(t)-\mathbf V_\infty^N,
\]
we deduce that
\[
X_{i,\infty}^N
:=
X_i^N(0)
+
\int_0^\infty
\left(
V_i^N(s)-\mathbf V_\infty^N
\right)\,\mathrm ds
\]
exists and
\[
\left|
X_i^N(t)
-t\mathbf V_\infty^N
-X_{i,\infty}^N
\right|
\leq
\frac C \eta\,\mathrm e^{-\eta t}.
\]
This yields to 
\[
\left|
\mathcal G^N_t\bigl(\boldsymbol{X}^N(t),\boldsymbol{V}^N(t)\bigr)
-
\bigl(X^N_\infty,0\bigr)
\right|^2
\leq
C^2\, N \left(1 +\frac{1}{\eta^2}\right)\mathrm e^{-2\eta t},
\]
where \(
X_{\infty}^N
:=
\bigl(
X_{1,\infty}^N,\ldots,X_{N,\infty}^N
\bigr).
\)
Setting \(
 \rho_\infty^N:=(X_{\infty}^N)_\#F_{\rm in}^N\), 
the definition of Wasserstein distance implies that
\[
\mathcal W_2^2 \left(
\widetilde F_t^N,\,
\rho_\infty^N\otimes\delta_0^{\otimes N}
\right)
\leq 
\int_{(\R^d \times \R^d)^N} \left|
\mathcal G^N_t\bigl(\boldsymbol{X}^N(t),\boldsymbol{V}^N(t)\bigr)
-
\bigl(X^N_\infty,0\bigr)
\right|^2\mathrm d f_{\mathrm{in}}^{\otimes N}
\leq
C\,N\, \mathrm e^{-2\eta t},
\]
which concludes the proof.
\end{proof}

The second corollary shows that the two limits as \(N \uparrow \infty\) and \(t \uparrow \infty\) commute. The proof follows by the triangle inequality and the uniform-in-time result of Theorem \ref{thm2} and it is omitted.
\begin{corollary}
Under the assumptions and notation of Theorem \ref{thm2},
there exists a constant \(C>0\), depending only on \(d\), \(\eta\), \(\mathrm{Lip}(\varphi)\), and the diameter of the velocity support of \(f_{\mathrm{in}}\), such that
\begin{equation}
\mathcal{W}_2\left(
\widetilde F_{k,t}^N,
 (\rho_\infty\otimes \delta_0)^{\otimes k}
\right)
\leq
C\sqrt{k}\left({\frac{1}{\sqrt{N}}}+  \mathrm e^{-\eta t}\right),
\end{equation}
    for fixed \(k \leq N-1\).
\end{corollary}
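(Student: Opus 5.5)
The plan is to combine Theorem~\ref{thm2} with the kinetic part of Corollary~\ref{cor1} using the triangle inequality for $\mathcal W_2$, with $\widetilde f_t^{\otimes k}$ as the intermediate measure. Concretely, I would write
\[
\mathcal W_2\bigl(\widetilde F^N_{k,t},(\rho_\infty\otimes\delta_0)^{\otimes k}\bigr)
\le
\mathcal W_2\bigl(\widetilde F^N_{k,t},\widetilde f_t^{\otimes k}\bigr)
+
\mathcal W_2\bigl(\widetilde f_t^{\otimes k},(\rho_\infty\otimes\delta_0)^{\otimes k}\bigr).
\]
The first term is bounded uniformly in $t$ by $C\sqrt{k/N}=C\sqrt k/\sqrt N$, directly from \eqref{in:propchaos}.

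For the second term I will use the elementary tensorization bound $\mathcal W_2(\mu^{\otimes k},\nu^{\otimes k})^2\le k\,\mathcal W_2(\mu,\nu)^2$. To prove it, take an optimal coupling $\gamma\in\Gamma(\mu,\nu)$ and use $\gamma^{\otimes k}$, suitably rearranged, as a coupling of the $k$-fold products. Its cost is the sum of $k$ identical single-particle costs. Applying this with $\mu=\widetilde f_t$ and $\nu=\rho_\infty\otimes\delta_0$, estimate \eqref{eq:kinetic-asy} then gives the bound $C\sqrt k\,\mathrm e^{-\eta t}$. Adding the two contributions yields the claimed estimate $C\sqrt k\,(N^{-1/2}+\mathrm e^{-\eta t})$.

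The only delicate point is the dependence of the constant. Corollary~\ref{cor1} only asserts the existence of some $C$, so I would redo its kinetic proof, which mirrors the discrete proof given above, in order to track the constants. The weights $\pi^\rho(t,\cdot)$ form a probability density against $\rho_t$. Hence $\langle v(t)\rangle_f$ lies in the convex hull of the velocity support at time $t$, and so does its limit $\mathbf v^f_\infty$. By \eqref{in:asymptflkin}, this gives $|v-\mathbf v^f_\infty|\le D^f_V(t)\le D^f_V(0)\,\mathrm e^{-\eta t}$ on $\mathrm{supp}\,f_t$, with no further constant. I would then integrate the characteristics in the moving frame, as in the discrete proof. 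This gives limiting positions $x_\infty$ and the bound $|\mathcal G_t(x(t),v(t))-(x_\infty,0)|\le D^f_V(0)(1+\eta^{-1})\mathrm e^{-\eta t}$. Setting $\rho_\infty$ to be the push-forward of $f_{\mathrm{in}}$ by the map to $x_\infty$ gives \eqref{eq:kinetic-asy} with $C=D^f_V(0)(1+1/\eta)$.

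Consequently, the final constant depends only on $d$, $\eta$, $\mathrm{Lip}(\varphi)$ and the velocity diameter of $\mathrm{supp}\, f_{\mathrm{in}}$. This is consistent with the dependence stated in Theorem~\ref{thm2}, given that $\eta=m/M$.
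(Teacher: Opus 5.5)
Your proposal is correct and is the argument the paper intends but omits: the triangle inequality through $\widetilde f_t^{\otimes k}$, with Theorem~\ref{thm2} bounding the first term and the kinetic estimate \eqref{eq:kinetic-asy} plus the tensorization bound $\mathcal W_2(\mu^{\otimes k},\nu^{\otimes k})\le\sqrt k\,\mathcal W_2(\mu,\nu)$ bounding the second. Two small points on your constant tracking: placing $\mathbf v^f_\infty$ in the convex hull of the velocity support at time $t$ uses that these hulls are nested in time, which is true since $\dot v=\int a^\rho(t,x,y)\,w\,\mathrm df_t(y,w)-v$ pulls each velocity toward a convex combination of current velocities, but you should say so; and Theorem~\ref{thm2}'s constant depends on $m$ and $M$ separately, so the dependence ``only through $\eta$'' comes from the statement itself and is not established by your argument.
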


\section{Consensus in the Motsch--Tadmor model: proof of Proposition \ref{thm1}}
\label{sec3}
We start by proving the following lemma concerning the distributions \(\{\pi^N_i\}_{i=1}^N\) and \(\pi^\rho(t,x)\) introduced in \eqref{def:eqmeas}.
\begin{lemma}
\label{lemma:lemmapi}
Consider the Motsch--Tadmor dynamics in \eqref{eq:MTmodel} and \eqref{eq:mesoVlasov} and let \(\{\pi_i^N(t)\}_{i=1}^N\) and \(\pi^\rho(t,x)\) as in \eqref{def:eqmeas}. 
Under the assumption \eqref{in:lowupbound}, the following results hold:
\begin{enumerate}
    \item the weights \(\{a_{ij}^N\}\) defined in \eqref{eq:relinf} are reversible w.r.t. \(\{\pi_i^N(t)\}_{i=1}^N\), i.e. \(\sum_{i=1}^N \pi^N_i(t)=1\) and
    \begin{equation}
    \label{eq:detbal}
    \pi^N_i(t) a_{ij}^N(t)=\pi^N_j(t) a_{ji}^N(t).
    \end{equation}
    Similarly, \(a^{\rho}(t)\) defined in \eqref{eq:meanforce} is reversible w.r.t. \(\pi^\rho(t)\), i.e. \(\int \pi^\rho(t,x)\mathrm d\rho_t(x)=1\) and
    \begin{equation}
    \label{eq:detbal2}
    \pi^\rho(t,x) a^{\rho}(t,x,y)=\pi^\rho(t,y) a^{\rho}(t,y,x).
    \end{equation}
    \item It holds that
    \begin{equation}
\label{in:boundpi}
\max_{1\le i\le N}\left|\dfrac{\dot \pi_i^N(t)}{\pi_i^N(t)}\right|\le \dfrac{2 \text{Lip}(\varphi)}{m}\, D_V^N(t),\quad
\left|\dfrac{\partial_t \pi^\rho+v\cdot \nabla_x \pi^\rho}{\pi^\rho}\right| \le  \frac{2 \text{Lip}(\varphi)}{m} D_V^f(t)
\end{equation}
where
\(D^N_V(t)\) is introduced in \eqref{def:suppdv}, while \(
D^f_V(t)
\) 
in \eqref{def:suppdvkin}.
\item For any \(i \in \{1,\dots,N\}\), it holds that
\[
\eta\le\min \left\{N\pi^N_i(t), \pi^\rho(t,x)\right\} \le \max \left\{N\pi^N_i(t), \pi^\rho(t,x)\right\}\le \dfrac{1}{\eta}, \quad \eta=\dfrac mM.
\]
\end{enumerate}
\end{lemma}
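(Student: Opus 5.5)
The three items are direct computations from the definitions in \eqref{eq:relinf}, \eqref{eq:meanforce} and \eqref{def:eqmeas}, and we will treat them in order.

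For item (1) we set $S_i:=\sum_\ell\varphi(|X_i^N-X_\ell^N|)$ and $Z:=\sum_{\ell,j}\varphi(|X_\ell^N-X_j^N|)=\sum_i S_i$. Then $\pi_i^N=S_i/Z$, so $\sum_i\pi_i^N=1$ is immediate. Moreover $\pi_i^N a_{ij}^N=\varphi(|X_i^N-X_j^N|)/Z$, which is symmetric in $i,j$ because $\varphi$ depends only on $|X_i^N-X_j^N|$; this gives \eqref{eq:detbal}. The kinetic case is identical. With $S(t,x):=\int\varphi(|x-y|)\,\mathrm d\rho_t(y)$ and $Z_\rho:=\int S\,\mathrm d\rho_t$ we have $\int\pi^\rho\,\mathrm d\rho_t=1$ and $\pi^\rho(t,x)a^\rho(t,x,y)=\varphi(|x-y|)/Z_\rho$, which is symmetric and yields \eqref{eq:detbal2}. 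All denominators are positive thanks to \eqref{in:lowupbound}.

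Item (3) also follows from \eqref{in:lowupbound}. We have $Nm\le S_i\le NM$ and $N^2m\le Z\le N^2M$, hence $N\pi_i^N=NS_i/Z\in[m/M,M/m]$. In the same way $S\in[m,M]$ and $Z_\rho\in[m,M]$, so $\pi^\rho\in[m/M,M/m]$.

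Item (2) is the only step that needs care. We write $\dot\pi_i^N/\pi_i^N=\dot S_i/S_i-\dot Z/Z$. Since $\varphi\in W^{1,\infty}$, the map $t\mapsto\varphi(|X_i^N-X_\ell^N|)$ is Lipschitz with derivative bounded a.e. by $\mathrm{Lip}(\varphi)\,|V_i^N-V_\ell^N|\le\mathrm{Lip}(\varphi)D_V^N$. This gives $|\dot S_i|\le N\,\mathrm{Lip}(\varphi)D_V^N$ and $|\dot Z|\le N^2\mathrm{Lip}(\varphi)D_V^N$. Dividing by the lower bounds $S_i\ge Nm$ and $Z\ge N^2m$, each quotient is at most $\mathrm{Lip}(\varphi)D_V^N/m$, and the two together give the factor $2$.

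For the kinetic bound we compute along characteristics, i.e. the material derivative $(\partial_t+v\cdot\nabla_x)\log\pi^\rho=(\partial_t+v\cdot\nabla_x)S/S-\dot Z_\rho/Z_\rho$. Using the weak form of \eqref{eq:mesoVlasov} we get $\partial_tS(t,x)=\int\nabla\varphi(|x-y|)\cdot w\,\mathrm df_t(y,w)$ (with $\nabla\varphi(|\cdot|)$ denoting the gradient of $z\mapsto\varphi(|z|)$ evaluated at $x-y$, the sign fixed by differentiating in $y$). Adding $v\cdot\nabla_xS$ combines these into
\[
(\partial_t+v\cdot\nabla_x)S(t,x)=\int\nabla\varphi(|x-y|)\cdot(v-w)\,\mathrm df_t(y,w),
\]
which is bounded by $\mathrm{Lip}(\varphi)D_V^f$ when $v$ lies in the velocity support. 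Similarly,
\[
\dot Z_\rho=\iint\nabla\varphi(|x-y|)\cdot(v-w)\,\mathrm df_t\,\mathrm df_t,
\]
bounded by $\mathrm{Lip}(\varphi)D_V^f$. Dividing by $S,Z_\rho\ge m$ gives the claim.

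The main obstacle is justifying these differentiations. In the particle case, where $\varphi$ is only Lipschitz, the identities hold a.e. in $t$. In the kinetic case we need the bound only for $(x,v)\in\operatorname{supp}f_t$, where it is meaningful, and we must justify differentiating under the integral using the $C^1_{\mathrm c}$ regularity of $f_t$. If needed, we will mollify $\varphi$ and pass to the limit, since the bounds depend only on $\mathrm{Lip}(\varphi)$.
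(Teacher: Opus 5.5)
Your proof is correct and follows essentially the paper's argument: detailed balance from $\pi_i^N a_{ij}^N=\varphi(|X_i^N-X_j^N|)/Z$ and the symmetry of $\varphi$, the bound on $\dot\pi_i^N/\pi_i^N$ via the logarithmic derivative and the lower bound $m$, and item (3) directly from \eqref{in:lowupbound}. Your explicit computation of $(\partial_t+v\cdot\nabla_x)\log\pi^\rho$ fills in the kinetic case, which the paper only calls analogous. You also add the restriction of $v$ to the velocity support of $f_t$, which the paper's statement leaves implicit but which the bound genuinely needs.
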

\begin{proof}
We prove the result at the discrete level, a similar argument works for the kinetic quantities.
If \(\varphi_{ij}:=\varphi(|X_i^N-X_j^N|)\), we get
\[
\pi^N_i a^N_{ij}=\dfrac{\sum_\ell \varphi_{i\ell}}{\sum_{\ell h}\varphi_{\ell h}} \dfrac{\varphi_{i j}}{\sum_{\ell} \varphi_{i \ell}}=
\dfrac{\sum_\ell \varphi_{j \ell}}{\sum_{\ell h}\varphi_{\ell h}} \dfrac{\varphi_{j i}}{\sum_{\ell} \varphi_{j \ell}}=
\pi^N_j\, a^N_{j i}\,
\]
since \(\varphi_{ij}=\varphi_{ji}\), this proves the first item. Concerning \eqref{in:boundpi}, we have
\begin{align}
\log \pi_i^N(t)=\log\left(\sum_\ell \varphi(|X_i^N(t)-X_\ell^N(t)|)\right)- \log\left( \sum_{\ell,h} \varphi(|X_\ell^N(t)-X_h^N(t)|\right).
\end{align}
Taking time derivatives, using \eqref{in:lowupbound} and the definition of \(D_V^N(t)\), we get \eqref{in:boundpi}.
Finally, last item follows by the uniform bounds in \eqref{in:lowupbound}. 
\end{proof}
\begin{proof}[Proof of Proposition \ref{thm1}]
We consider the weighted average of the velocities introduced in \eqref{eq:weightedN}
and we show that the limit in time of this quantity exists and corresponds to the consensus velocity of the model. Taking the time derivative
\[
\dfrac{{\mathrm d}}{\mathrm dt} \langle \boldsymbol{V}^N(t)\rangle_{N}= \sum_{i=1}^N   \dot \pi_i^N(t) V_i^N(t)+  \sum_{i=1}^N \pi_i^N(t) \dot V_i^N(t),
\]
by a straightforward computation, the second term is identically equal to zero thanks to \eqref{eq:detbal}.
Hence
\[
\dfrac{{\mathrm d}}{\mathrm dt} \langle \boldsymbol{V}^N(t)\rangle_{N}= \sum_{i=1}^N   \dot \pi_i^N(t) V_i^N(t)= \sum_{i=1}^N   \dot \pi_i^N(t)\left[ V_i^N(t)-  \langle \boldsymbol{V}^N(t)\rangle_{N}\right],
\]
since \(\sum_i \dot \pi_i=0\). By \eqref{in:boundpi}, it follows that
\[
\left| \dfrac{{\mathrm d}}{\mathrm dt} \langle \boldsymbol{V}^N(t)\rangle_{N}\right| \le \sum_{i=1}^N \pi_i^N(t)\left|\dfrac{\dot \pi_i^N(t)}{\pi_i^N(t)}\right|\sum_{j=1}^N \pi_j^N(t)|V_i^N(t)-V_j^N(t)|\le \dfrac{2\text{Lip}(\varphi)}{m}[D^N_V(t)]^2.
\]
By the asymptotic flocking \eqref{in:asymptfl}, we conclude that
\[
\left| \dfrac{{\mathrm d}}{\mathrm dt} \langle \boldsymbol{V}^N(t)\rangle_{N}\right|  \le  \dfrac{2 \text{Lip}(\varphi)}{m} [D^N_V(0)]^2 \mathrm{e}^{-2\eta t}.
\]
This implies that the limit in time of \(\langle \boldsymbol{V}^N(t) \rangle_{N}\) exists and is given by 
\begin{equation}
\label{eq:vinf}
\mathbf{V}^N_\infty:= \langle \boldsymbol{V}^N(0) \rangle_{N}+\int_0^\infty \sum_{i=1}^N \dot \pi_i^N(t) V_i^N(t)\, {\mathrm d}t.
\end{equation}
Moreover
\begin{align*}
|V_i^N(t)- \mathbf{V}_\infty^N| &\le |V_i^N(t)- \langle \boldsymbol{V}^N(t) \rangle_{N}| + |\langle \boldsymbol{V}^N(t) \rangle_{N}- \mathbf{V}_\infty^N|\\
&\le \sum_{j=1}^N \pi_j^N(t) |V_i^N(t)- V_j^N(t)| + |\langle \boldsymbol{V}^N(t) \rangle_{N}- \mathbf{V}_\infty^N|\le C{\mathrm e}^{-\eta t},
\end{align*}
with \(
C:=D^N_V(0)+\text{Lip}(\varphi) [D^N_V(0)]^2/ \eta m.
\)

Similarly, at the kinetic level we can
consider the average velocity introduced in \eqref{eq:weightedf}.
Taking the time derivative and integrating by parts, we get
\begin{align}
\dfrac{{\mathrm d}}{\mathrm dt} \langle v(t)\rangle_{f}&=\int \partial_t \pi^\rho(t,x) v\, \mathrm df_t(x,v)+  \int  \pi^\rho(t,x) v\, \mathrm \partial_tf_t(x,v) \mathrm dx \mathrm dv\nonumber\\
&=\int \left(\partial_t \pi^\rho(t,x) + v \cdot \nabla_x \pi^\rho(t,x)\right) v \, \mathrm d f_t(x,v) \nonumber\\
&\quad+ \int \pi^\rho(t,x) \left[\,\int a^\rho(t,x,y)(w-v)\, \mathrm df_t(y,w)\right] \mathrm df_t(x,v),
\end{align}
by a straightforward computation, the last term is identically equal to zero thanks to \eqref{eq:detbal2}.
Hence
\begin{align}
\dfrac{{\mathrm d}}{\mathrm dt} \langle v(t)\rangle_{f}=\int \left(\partial_t \pi^\rho(t,x) + v \cdot \nabla_x \pi^\rho(t,x)\right) [v-\langle v(t) \rangle_{f}] \, \mathrm d f_t(x,v)
\end{align}
since \(\partial_t \int \pi^\rho(t,x) \mathrm d\rho_t(x)=0\).
By \eqref{in:boundpi}, it follows that
\begin{align}
\left| \dfrac{{\mathrm d}}{\mathrm dt} \langle v(t)\rangle_{f}\right| &\le \int \, \mathrm df_t(x,v)\pi^\rho(t,x)\left|\dfrac{\partial_t\pi^\rho(t)+v\cdot \nabla_x \pi^\rho(t)}{\pi^\rho(t)}\right|\int \pi^\rho(t,y)|v-w|\, \mathrm df_t(y,w)\nonumber\\
&\le \dfrac{2\text{Lip}(\varphi)}{m}[D^f_V(t)]^2.
\end{align}
By the asymptotic flocking \eqref{in:asymptflkin}, we conclude that
\[
\left| \dfrac{{\mathrm d}}{\mathrm dt} \langle v(t)\rangle_{f}\right|  \le  \dfrac{2 \text{Lip}(\varphi)}{m} [D^f_V(0)]^2 \mathrm{e}^{-2\eta t}.
\]
This implies that the limit in time of \(\langle v(t) \rangle_{f}\) exists and is given by 
\begin{equation}
\label{eq:vinff}
\mathbf{v}^f_\infty:= \langle v(0) \rangle_{f}+\int_0^\infty \int(\partial_t \pi^\rho(t)+v\cdot \nabla_x \pi^\rho(t)) v \, \mathrm df_t(x,v)\, {\mathrm d}t.
\end{equation}
Moreover, for any \(v \in \text{supp} f_t\)
\[
|v- \mathbf{v}_\infty^f| \le |v- \langle v(t) \rangle_{f}| + |\langle v(t) \rangle_{f}- \mathbf{v}_\infty^f|\le C{\mathrm e}^{-\eta t},
\]
with \(C:=D^f_V(0)+\text{Lip}(\varphi) [D^f_V(0)]^2/ \eta m\).
\end{proof}

\section{Uniform-in-time propagation of chaos: Proof of Theorem \ref{thm2}}
\label{sec4}
 We introduce the \(N\)-particle dynamics
\(
\{\bar X_i(t),\bar V_i(t)\}_{i=1}^N
\)
given by \(N\) independent copies of the system
\begin{equation}
\label{eq:interdyn}
\begin{cases}
\dot{\bar X}_i(t)=\bar V_i(t),\\
\dot{\bar V}_i(t)
=
F[f_t]\bigl(\bar X_i(t),\bar V_i(t)\bigr),
\end{cases}\qquad i \in \{1,\dots,N\}
\end{equation}
where \(f_t\) is a regular compactly supported solution of the kinetic Motsch--Tadmor model \eqref{eq:mesoVlasov} and \(F[f_t]\) is defined in \eqref{eq:meanforce}. 
We denote by
\begin{equation}
\delta X_i(t):=X_i^N(t)-\bar X_i(t),
\qquad
\delta V_i(t):=V_i^N(t)-\bar V_i(t),
\end{equation}
where \(\{X^N_i(t),V^N_i(t)\}\) and \(\{\bar X_i(t), \bar V_i(t)\}\) evolve with the same initial datum \((X_N, V_N) \in \mathrm{supp}(f_{\mathrm{in}}^{\otimes N})\).
Let $\widetilde F_{k,t}^{N}$ be the $k$-particle marginal of \((\mathcal{G}^N_t)_\# F^N_t\) and
let \(\widetilde f_t^{\otimes k}:=[(\mathcal{G}_t)_\#f_t]^{\otimes k}\).

By the definition of \(2\)-Wasserstein distance:
$$
\begin{aligned}
\mathcal W_2^2
\left(
\widetilde F_{k,t}^N,
\widetilde f_t^{\otimes k}
\right)
\leq
\sum_{i=1}^k
\int
\left|
\delta X_i(t)-t\Delta\mathbf  V_\infty
\right|^2
+
\left|
\delta V_i(t)-\Delta \mathbf V_\infty
\right|^2 \,\mathrm df^{\otimes N}_{\mathrm{in}}(X_N,V_N),
\end{aligned}
$$
where
$
\Delta \mathbf V_\infty
:=
\mathbf V_\infty^N-\mathbf v^f_\infty.
$
To bound the Wasserstein distance, it will be useful to introduce the following variance computed with respect to the distribution \(\{\pi_i^N(t)\}^N_{i=1}\) introduced in \eqref{def:eqmeas}:
\begin{equation}
\label{variance_X}
\mathcal{X}_N(t):= \sum_{i=1}^N \pi^N_i(t) \left|\delta X_i(t)-\langle \delta\boldsymbol{ X}(t)\rangle_{N}\right|^2= \frac{1}{2}\sum_{i,j=1}^N \pi^N_{i}(t) \pi^N_{j}(t) \left|\delta X_i(t)-\delta X_j(t)\right|^2
\end{equation}
and, similarly,
\[
\mathcal{V}_N(t):= \sum_{i=1}^N \pi^N_i(t) \left|\delta V_i(t)-\langle \delta \boldsymbol{V}(t)\rangle_{N}\right|^2= \frac{1}{2}\sum_{i,j=1}^N \pi^N_{i}(t) \pi^N_{j}(t) |\delta V_i(t)-\delta V_j(t)|^2,
\]
where the weighted averages \(\langle \delta \boldsymbol{X} \rangle_N\) and \(\langle \delta \boldsymbol{V}\rangle_N\) are defined in \eqref{eq:weightedN}.
Using that
\(
\pi^N_i(t)\geq\frac{\eta}{N},
\)
we obtain
\[
\frac1{N^2}
\sum_{i\neq j}^N
|\delta X_i(t)-\delta X_j(t)|^2
\leq
\frac{2}{\eta^2}\mathcal X_N(t),
\]
and analogously for \(\mathcal{V}_N(t)\).

\subsection{Step 1: Variance estimate}

We compute the time-derivative of the variances. We get
\begin{align}
\label{formula:var}
\dfrac{{\mathrm d}}{\mathrm dt} &\mathcal{X}_N(t)
=\sum_{i=1}^N \dot \pi_i^N(t) | \delta X_i(t) - \langle \delta \boldsymbol{X}(t)\rangle_N|^2+ 2 \sum_{i=1}^N  \pi_i^N(t)  [\delta X_i(t) - \langle \delta \boldsymbol{X}(t) \rangle_N]\cdot \dot{\delta X_i}(t),
\end{align}
and an analogous formula holds for \(\mathcal{V}_N(t)\).
By a direct computation, for \(i \in \{1,\dots,N\}\), it follows that
\begin{equation}
\label{formula:diffdyn}
\begin{cases}
    \dot{\delta X_i}(t)= {\delta V_i}(t)\\
    \dot{\delta V_i}(t)=\displaystyle{\sum_{j=1}^N
a_{ij}\left(\boldsymbol{X}^N(t)\right)
(\delta V_j(t)-\delta V_i(t))
+\mathcal{E}^X_i(t)
+
\mathcal{E}^{\text{c}}_i}(t)
\end{cases}
\end{equation}
where we have introduced two error quantities:
\begin{equation}
\label{error:x}
\mathcal{E}^X_i(t)
:=
\sum_{j=1}^N
\left[
a_{ij}\left(\boldsymbol{X}^N(t)\right)
-
a_{ij}\left(\bar{\boldsymbol{X}}(t)\right)
\right]
(\bar V_j(t)-\bar V_i(t)),
\end{equation}
and the \emph{consistency error}, defined by the discrepancy between the empirical force and the deterministic mean-field force evaluated along the independent trajectories \(\{\bar X_i(t), \bar V_i(t)\}_{i=1}^N\),
\begin{equation}
\label{error:cons}
\mathcal{E}_i^{\text{c}}(t)
:=
\sum_{j=1}^N
a_{ij}(\bar{\boldsymbol{X}}(t))
\bigl(\bar V_j(t)-\bar V_i(t)\bigr)
-
F[f_t]\bigl(\bar X_i(t),\bar V_i(t)\bigr).
\end{equation}
We apply the formula for the variance in \eqref{formula:var} and substitute \eqref{formula:diffdyn}, we get
\begin{align*}
\dfrac{{\mathrm d}}{\mathrm dt} &\mathcal{V}_N(t)
=\underbrace{\sum_{i=1}^N \dot \pi_i^N(t) | \delta V_i(t) - \langle \delta \boldsymbol{V}(t)\rangle_{N}|^2}_{A_1(t)} \\
&+\underbrace{2 \sum_{i,j=1}^N  \pi_i^N(t) a_{ij}(\boldsymbol{X}^N(t))[\delta V_j(t)- \delta V_i(t)]\cdot [\delta V_i(t) - \langle \delta \boldsymbol{V}(t) \rangle_{N}]}_{A_2(t)}\\
&+ \underbrace{2 \sum_{i=1}^N  \pi_i^N(t) \mathcal{E}^X_i(t)\cdot [\delta V_i(t) - \langle \delta \boldsymbol{V}(t) \rangle_{N}]}_{A_3(t)}
+\underbrace{2 \sum_{i=1}^N  \pi_i^N(t) \mathcal{E}_i^{\text{c}}(t)\cdot [\delta V_i(t) - \langle \delta \boldsymbol{V}(t) \rangle_{N}]}_{A_4(t)}.
\end{align*}
We study separately the four terms \(A_1(t),\dots,A_4(t)\).

\hspace{0.5 pt}

\noindent\underline{\emph{\(A_1(t)\) term}:} We multiply and divide by \(\pi^N_i(t)\), using \eqref{in:boundpi} in Lemma \ref{lemma:lemmapi} and \eqref{in:asymptfl}, we get
\begin{equation}
\label{est:A1}
A_1(t)\le \max_{i\in \{1,\dots,N\}} \left|\dfrac{\dot \pi^N_i(t)}{\pi^N_i(t)}\right|\mathcal{V}_N(t)\le \frac{2 \text{Lip}(\varphi)}{m} D^N_V(0) {\mathrm{e}^{-\eta t}}\mathcal{V}_N(t).
\end{equation}
\hspace{0.5 pt}

\noindent\underline{\emph{\(A_2(t)\) term}:}
We use the detailed-balance property \eqref{eq:detbal} of the distribution \(\{\pi^N_i(t)\}_{i=1}^N\). It holds that
\[
A_2(t)=- \sum_{i,j=1}^N \pi_i^N(t) a_{ij}(\boldsymbol{X}^N(t)) |\delta V_i(t)- \delta V_j(t)|^2 \le 0.
\]
Moreover, multiplying and diving by \(\pi_j^N(t)\) and using that \(\frac{a_{ij}}{\pi^N_j}\geq \eta^2\), we get
\[
 \sum_{i,j=1}^N \pi_i^N (t)a_{ij}(\boldsymbol{X}^N(t)) |\delta V_i(t)- \delta V_j(t)|^2 \ge \eta^2 \sum_{i,j=1}^N \pi_i^N(t) \pi_j^N(t)  |\delta V_i(t)- \delta V_j(t)|^2= 2\eta^2 \mathcal{V}_N(t).
\]
Hence
\begin{equation}
    \label{est:A2}
    A_2(t) \le - 2\eta^2 \mathcal{V}_N(t).
\end{equation}
\hspace{0.5 pt}

\noindent\underline{\emph{\(A_3(t)\) term}:}
By Cauchy--Schwarz, 
\[
A_3(t) \le 2\sqrt{\mathcal{V}_N(t)} \left(\sum_{i=1}^N{\pi_i^N(t)}|\mathcal{E}^X_i(t)|^2\right)^{\frac12}.
\]
We bound the second term involving \(\mathcal{E}_i^X(t)\). For any \(i \in \{1,\dots,N\}\), we get
\[
|\mathcal{E}_i^X(t)| \le \sum_{j=1}^N \left|a_{ij}\left(\boldsymbol{X}^N(t)\right)-a_{ij}\left(\bar{\boldsymbol{X}}(t)\right)\right| D_V^f(t),
\]
where we used that
\[
\max_{i,j \in \{1,\dots,N\}}|\bar V_i(t)- \bar V_j(t)|\leq D_V^f(t).
\]
By the triangle inequality, it is not difficult to show that
\[
\sum_{j=1}^N\left|a_{ij}\left(\boldsymbol{X}^N(t)\right)-a_{ij}\left(\bar{\boldsymbol{X}}(t)\right)\right|\le \text{Lip}(\varphi)\left(\frac 1m + \frac{M}{m^2}\right) \frac{1}{N} \sum_{j=1}^N |\delta X_i-\delta X_j|.
\]
Hence, by the Jensen's inequality,
\[
\left|\mathcal{E}^X_i(t)\right|^2 \le \left[D_V^f(t)\right]^2 \text{Lip}^2(\varphi)\left(\frac 1m + \frac{M}{m^2}\right)^2 \frac{1}{N} \sum_{j=1}^N |\delta X_i -\delta X_j|^2.
\]
Multiplying and dividing by \(\pi^N_j(t)\), by \eqref{variance_X} and using that \(\pi_j^N(t) N \geq \eta\), this implies that
\begin{equation}
\label{in:A3term}
\sum_{i=1}^N{\pi_i^N(t)}|\mathcal{E}^X_i(t)|^2 \le \left[D_V^f(t)\right]^2 \text{Lip}^2(\varphi)\left(\frac 1m + \frac{M}{m^2}\right) ^2 \frac{2}{\eta} \mathcal{X}_N(t).
\end{equation}
Using the exponential flocking property in \eqref{in:asymptflkin},
we conclude that
\begin{equation}
    \label{est:A3}
    A_3(t) \le \sqrt{\frac{8}{\eta}} D_V^f(0) \, \text{Lip}(\varphi)\left(\frac 1m + \frac{M}{m^2}\right) {\mathrm e}^{-\eta t}\sqrt{\mathcal{X}_N(t)\,\mathcal{V}_N(t)}.
\end{equation}

\hspace{0.5 pt}

\noindent\underline{\emph{\(A_4(t)\) term}: consistency error.}
This is the term where a law of large numbers is needed.
We use again the Cauchy--Schwarz inequality, to obtain
\begin{equation}
\label{est:A4}
    A_4(t)\le 
\sqrt{\mathcal{V}_N(t)} \left(\sum_{i=1}^N{\pi_i^N(t)}|\mathcal{E}_i^{\text{c}}(t)|^2\right)^{\frac12},
\end{equation}
and we keep this quantity in the following estimates.
At the end of the proof, we will need the following \(L^2\)-consistency estimate, which is standard in the literature:
\begin{equation}
\label{in:consest}
 \left \|\mathcal{E}_i^c(t) \right\|^2_{L^2( f_{\mathrm{in}}^{\otimes N})}=\int_{(\R^d \times \R^d)^{N}} [\mathcal{E}^{\text{c}}_i(t)]^2{\mathrm d}f^{\otimes N}_0(X_N, V_N) \le \frac{\left[2D_V^f(0)\right]^2}{\eta^2  N} \mathrm e^{-2\eta t}.
\end{equation}
Details can be found in the Appendix \ref{sec:appendix}.
\subsection{Step 2: Grönwall estimate}
Collecting the estimates in \eqref{est:A1}, \eqref{est:A2}, \eqref{est:A3} and \eqref{est:A4}, we get
\begin{align*}
\dfrac{{\mathrm d}}{\mathrm dt} &\mathcal{V}_N(t)
\le \frac{2 \text{Lip}(\varphi)}{m} D^N_V(0) {\mathrm{e}^{-\eta t}}\mathcal{V}_N(t)-2\eta^2 \mathcal{V}_N(t)\\
&+\sqrt{\frac{8}{\eta}} D_V^f(0) \, \text{Lip}(\varphi)\left(\frac 1m + \frac{M}{m^2}\right) {\mathrm e}^{-\eta t} \sqrt{\mathcal{X}_N(t)\,\mathcal{V}_N(t)}+\sqrt{\mathcal{V}_N(t)} \left(\sum_{i=1}^N{\pi_i^N(t)}|\mathcal{E}_i^{\text{c}}(t)|^2\right)^{\frac12}.
\end{align*}
Similarly, it is not difficult to show that
\begin{align*}
\dfrac{{\mathrm d}}{\mathrm dt}\mathcal{X}_N(t)
\le\frac{2 \text{Lip}(\varphi)}{m} D^N_V(0) {\mathrm{e}^{-\eta t}}\mathcal{X}_N(t)+2 \sqrt{\mathcal{X}_N(t)\, \mathcal{V}_N(t)}.
\end{align*}
The variance estimates established above imply that, almost surely and for almost every $t\geq0$,
\begin{equation}
\dfrac{\mathrm d}{\mathrm dt} \sqrt{\mathcal{X}_N(t)}
\leq
C_1 \mathrm e^{-\eta t}\sqrt{\mathcal{X}_N(t)}+\sqrt{\mathcal{V}_N(t)},
\end{equation}
and
\begin{equation}
\label{in:sqrtvn}
\dfrac{\mathrm d}{\mathrm dt}\sqrt{\mathcal{V}_N(t)}
\leq
-\eta^2 \sqrt{\mathcal{V}_N(t)}
+
\mathrm e^{-\eta t}\bigl(C_1\, \sqrt{\mathcal{V}_N(t)}+C_2\, \sqrt{\mathcal{X}_N(t)}\bigr)
+
\frac 12\left(\sum_{i=1}^N{\pi_i^N(t)}|\mathcal{E}_i^{\text{c}}(t)|^2\right)^{\frac12},
\end{equation}
where
\[
C_1:=\frac{\text{Lip}(\varphi)}{m} D^N_V(0), \quad C_2:=\sqrt{\frac{2}{\eta}} D_V^f(0) \, \text{Lip}(\varphi)\left(\frac 1m + \frac{M}{m^2}\right).
\]
Let
\(
\mathcal H_N(t):=\sqrt{\mathcal{X}_N(t)}+\frac{1}{\eta^2}\,\sqrt{\mathcal{V}_N(t)},
\)
we obtain
\begin{equation}
\dfrac{\mathrm d}{\mathrm dt}\mathcal H_N(t)
\leq
\left(\frac{C_1+C_2}{\eta^2}\right) \mathrm e^{-\eta t}\mathcal H_N(t)
+
\frac{\zeta_N(t)}{2\,\eta^2},
\end{equation}
where
\begin{equation}
\label{def:zeta}
\zeta_N(t):= \left(\sum_{i=1}^N{\pi_i^N(t)}|\mathcal{E}_i^{\text{c}}(t)|^2\right)^{\frac12}.
\end{equation}
Applying Grönwall's inequality, we obtain for any \(t \geq 0\),
\begin{equation}
\label{in:Gron}
\mathcal H_N(t)
\leq
\frac{1}{2\eta^2}\int_0^t
\exp\left(
\left[\frac{C_1+C_2}{\eta^2}\right]\int_s^t \mathrm e^{-\eta \tau}\,{\mathrm d}\tau
\right)
\zeta_N(s)\,{\mathrm d}s
\leq
\frac{{\mathrm e}^{\frac{C_1+C_2}{\eta^3}}}{2\, \eta^2}\int_0^t\zeta_N(s)\,\mathrm ds.
\end{equation}
By the previous estimates, by taking the \(L^2(f_{\mathrm{in}}^{\otimes N})\)-norm and using the consistency estimate \eqref{in:consest}, we get
\[
\|\mathcal{H}_N(t)\|_{L^2}\leq \frac{C_3}{\sqrt N},
\qquad
C_3:=\frac{ D_V^f(0){\mathrm e}^{\frac{C_1+C_2}{\eta^3}}}{ \eta^{4}}.
\]
Hence, applying the variation-of-constants formula in \eqref{in:sqrtvn},
we obtain
$$
\sqrt{\mathcal V_N(t)}
\leq
\int_0^t
\mathrm e^{-\eta^2(t-s)}
\left(
\mathrm e^{-\eta s}C_4\,{\mathcal H_N(s)}+\frac12 \zeta_N(s)
\right)\,\mathrm ds,
$$
where
$
C_4:=C_1 \eta^2+C_2.
$
Thus, taking the \(L^2(f_{\mathrm{in}}^{\otimes N})\)-norm yields
\begin{equation}
\label{in:mainv}
\left\|\sqrt{\mathcal{V}_N(t)}\right \|_{L^2}
\leq C_5\frac{e^{-\eta^2 t}\,t}{\sqrt{N}},
\qquad t\geq0,
\end{equation}
where 
$$
C_5:= \left(C_3 \,C_4 + \frac{D_V^f(0)}{\eta} \right).
$$
\subsection{Step 3: Velocity discrepancies}

We now study the convergence of the velocity discrepancies \(\{\delta V_i(t)\} \). 
Differentiating and using the dynamics of \(\delta V_i\) in \eqref{formula:diffdyn}, we obtain
$$
\begin{aligned}
\frac{\mathrm d}{\mathrm dt}\langle\delta \boldsymbol{V}(t)\rangle_{N}
&=
\sum_{i=1}^N\dot\pi_i^N(t)\delta V_i(t)
+
\sum_{i,j=1}^N
\pi_i^N(t)a_{ij}(\boldsymbol{X}^N(t))
(\delta V_j(t)-\delta V_i(t))
\\
&\qquad
+
\sum_{i=1}^N\pi_i^N(t)\mathcal E_i^X(t)
+
\sum_{i=1}^N\pi_i^N(t)\mathcal E_i^{\rm c}(t),
\end{aligned}
$$
where \(\mathcal{E}_i^X(t)\) and \(\mathcal E_i^{\rm c}(t)\) are introduced in \eqref{error:x} and \eqref{error:cons}. 

By the detailed balance relation \eqref{eq:detbal}, the second term vanishes. Moreover, since
\(\sum_i\dot\pi_i^N(t)=0\),
$$
\sum_i\dot\pi_i^N(t)\delta V_i(t)
=
\sum_i\dot\pi_i^N(t)
\bigl(\delta V_i(t)-\langle\delta \boldsymbol{V}(t)\rangle_{N}\bigr).
$$
Therefore, multiplying and dividing the first term by \(\pi_i^N(t)\), by Cauchy--Schwarz and using that \(\sum_i \pi_i^N=1\),
$$
\begin{aligned}
\left|\frac{\mathrm d}{\mathrm dt} \langle\delta \boldsymbol{V}(t)\rangle_{N} \right|
\leq
\max_i
\left|
\frac{\dot\pi_i^N(t)}{\pi_i^N(t)}
\right|
\sqrt{\mathcal V_N(t)}
+
\left(
\sum_i\pi_i^N(t)|\mathcal E_i^X(t)|^2
\right)^{1/2}
+
\zeta_N(t),
\end{aligned}
$$
where \(\zeta_N(t)\) is defined in \eqref{def:zeta}.
Using the bounds in \eqref{in:boundpi} and \eqref{in:A3term},
we conclude that
$$
\left|\frac{\mathrm d}{\mathrm dt} \langle\delta \boldsymbol{V}(t)\rangle_{N} \right|
\leq
C_6\,\mathrm e^{-\eta t}
\mathcal H_N(t)+\zeta_N(t),
$$
with 
$$
C_6:=\frac{\mathrm{Lip}(\varphi)}{m}\left[2\, D_V^N(0)\, \eta^2 + \left(1+\frac{1}{\eta}\right)\sqrt{\frac{2}{\eta}}D_V^f(0)\right].
$$
Hence, taking the \(L^2(f_{\mathrm{in}}^{\otimes N})\)-norm,
\begin{equation}
\label{eq:qdot}
\left\|\frac{\mathrm d}{\mathrm dt} \langle\delta \boldsymbol{V}(t)\rangle_{N}\right\|_{L^2}
\leq
\frac{C_7}{\sqrt N}\mathrm e^{-\eta t},
\end{equation}
with  $C_7:=C_6+2\, D_V^f(0)/\eta$. 
Since
$
V_i^N(t)\to \mathbf V_\infty^N
$
and 
$
\bar V_i(t)\to \mathbf v^f_\infty,
$
we have
$
\langle \delta V(t) \rangle_{N}\to \Delta \mathbf V_\infty
$ as $t\to\infty$.
Thus, from \eqref{eq:qdot},
\begin{equation}
\label{eq:qtail}
\|\langle \delta V(t)\rangle_{N}-\Delta \mathbf V_\infty\|_{L^2}
\leq
\frac{C_7}{\eta\, \sqrt N}\mathrm e^{-\eta t}.
\end{equation}
We can now study the convergence of each velocity discrepancy. Indeed,
\begin{equation}
\label{eq:triangle}
\delta V_i(t)-\Delta \mathbf V_\infty
=
\bigl(\delta V_i(t)-\langle \delta V(t)\rangle_{N}\bigr)
+
(\langle \delta V(t)\rangle_{N}-\Delta \mathbf V_\infty\bigr).
\end{equation}
By exchangeability and the lower bound
\(\pi_i^N(t)\geq\eta/N\),
$$
\begin{aligned}
\|\delta V_i(t)-\langle \delta V(t)\rangle_{N}\|^2_{L^2}
=
\frac1N
\sum_{j=1}^N
\|
\delta V_j(t)-\langle \delta V(t)\rangle_{N}\|_{L^2}^2
\leq
\frac1\eta
\,\left\|\sqrt{\mathcal V_N(t)}\right\|_{L^2}^2.
\end{aligned}
$$
Combining this inequality with \eqref{in:mainv} and \eqref{eq:qtail}, 
we obtain
\begin{equation}
\label{eq:velocity-Galilean}
\left\|
\delta V_i(t)-\Delta \mathbf V_\infty
\right\|_{L^2}
\leq
\frac{C_8}{\sqrt N}\left(\mathrm e^{-\eta^2 t}t+\mathrm e^{-\eta t}\right)
\end{equation}
with 
$$
C_8:=\frac{C_5}{\sqrt{\eta}}+\frac{C_7}{\eta}.
$$
\subsection{Step 4: Conclusion}
Since the two systems start from the same initial configuration,
\(\delta X_i(0)=0\), and therefore
$
\delta X_i(t)
=
\int_0^t\delta V_i(s)\,\mathrm ds.
$
It follows that
$$
\delta X_i(t)-t\Delta \mathbf V_\infty
=
\int_0^t
\left(
\delta V_i(s)-\Delta \mathbf V_\infty
\right)\,\mathrm ds.
$$
By Minkowski's inequality and \eqref{eq:velocity-Galilean},
\begin{equation}
\label{eq:position-Galilean}
\begin{aligned}
\left\|
\delta X_i(t)-t\Delta \mathbf V_\infty
\right\|_{L^2}
\leq
\int_0^t
\left\|
\delta V_i(s)-\Delta \mathbf V_\infty
\right\|_{L^2} \mathrm ds
\leq
\frac{C_8}{\sqrt N}\left( \frac{1}{\eta^4}+\frac 1 \eta\right).
\end{aligned}
\end{equation}
Therefore, by
$$
\begin{aligned}
\mathcal W_2^2
\left(
\widetilde F_{k,t}^N,
\widetilde f_t^{\otimes k}
\right)
\leq
\sum_{i=1}^k \int
\left|
\delta X_i(t)-t\Delta \mathbf V_\infty
\right|^2
+
\left|
\delta V_i(t)-\Delta \mathbf V_\infty
\right|^2\, \mathrm{d} f_{\mathrm{in}}^{\otimes N}(X_N,V_N),
\end{aligned}
$$
and using \eqref{eq:velocity-Galilean} and
\eqref{eq:position-Galilean}, we conclude that
$$
\sup_{t\geq0}
\mathcal W_2^2
\left(
\widetilde F_{k,t}^N,
\widetilde f_t^{\otimes k}
\right)
\leq
\frac{k}{N}\, C_8^2 \left[\left( \frac{1}{\eta^4}+\frac 1 \eta\right)^2+ \left(1+\frac{1}{\eta^2}\right)^2 \right],
\qquad
1\leq k\leq N.
$$
The conclusion follows by taking
\begin{equation}
    \label{eq:finalC}
C:= C_8 \sqrt{\left( \frac{1}{\eta^4}+\frac 1 \eta\right)^2+ \left(1+\frac{1}{\eta^2}\right)^2},
\end{equation}
and by noticing that, in all the introduced constants, \(D_V^N(0) \le D_V^f(0)\).
\qed

\addtocontents{toc}{\protect\setcounter{tocdepth}{0}}

\section*{Acknowledgments}
\noindent TK and LT acknowledge the support of the OeAD (project WTZ FR 04/2026). SR acknowledges the European Union through the ERC CoG KiLiM grant, project number 101125162\footnote{Views and opinions expressed are however those of the authors
only and do not necessarily reflect those of the European Union or the European Research
Council. Neither the European Union nor the granting authority can be held responsible for
them.}.

\subsection*{Use of AI technologies}
During the preparation of this manuscript, ChatGPT-5.6 Sol (OpenAI) was used for preliminary brainstorming and conceptual exploration,
as well as to improve the clarity, readability, and presentation of the manuscript.
All mathematical arguments were examined, rigorously developed, and independently verified by the authors, who take full responsibility for the mathematical correctness and final content of the manuscript.

\addtocontents{toc}{\protect\setcounter{tocdepth}{1}}

\appendix
\section{Estimate of the consistency error}
\label{sec:appendix}

The aim is to show the \(L^2\)-consistency estimate~\eqref{in:consest}. We consider the \(L^2\)-norm
\begin{align*}
    \left\|\mathcal E_i^c(t) \right\|^2_{L^2( f_{\mathrm{in}}^{\otimes N})}=\int_{(\R^d \times \R^d)^{N}} [\mathcal{E}^{\text{c}}_i(t)]^2{\mathrm d}f^{\otimes N}_{\mathrm{in}}(X_N,V_N),
\end{align*}
where we recall that
\[
\mathcal{E}_i^c(t):=\sum_{j=1}^N a_{ij}(\bar{\boldsymbol{X}}(t)) \bigl(\bar V_j(t)-\bar V_i(t)\bigr)- F[f_t]\bigl(\bar X_i(t),\bar V_i(t)).
\]
By triangle inequality, we have
\[
 \left\|\mathcal E_i^c(t) \right\|_{L^2( f_{\mathrm{in}}^{\otimes N})}\le  \left\|\mathcal E_i^{c,1}(t) \right\|_{L^2( f_{\mathrm{in}}^{\otimes N})}+ \left\|\mathcal E_i^{c,2}(t) \right\|_{L^2( f_{\mathrm{in}}^{\otimes N})},
\]
where
\[
\mathcal{E}_i^{c,1}(t):=\frac 1N\sum_{j=1}^N \left[\dfrac{\varphi(|\bar X_i(t)-\bar X_j(t)|}{\frac 1N\sum_h \varphi(|\bar X_i(t)-\bar X_h(t)|)} - \dfrac{\varphi(|\bar X_i(t)-\bar X_j(t)|}{\int \varphi(|\bar X_i(t)-y|)\mathrm d\rho_t(y)}\right] \bigl(\bar V_j(t)-\bar V_i(t)\bigr),
\]
and
\[
\mathcal{E}_i^{c,2}(t):=\frac 1N\sum_{j=1}^N\dfrac{\varphi(|\bar X_i(t)-\bar X_j(t)|}{\int \varphi(|\bar X_i(t)-y|)\mathrm d\rho_t(y)} \bigl(\bar V_j(t)-\bar V_i(t)\bigr)- F[f_t](\bar X_i(t), \bar V_i(t)).
\]
By \eqref{in:lowupbound} and using that \(|\bar V_i(t) -\bar V_j(t)| \le D_V^f(t)\), we first obtain
\[
\begin{aligned}
|\mathcal E_i^{c,1}(t)|
&\le
\frac{MD_V^f(t)}{m^2}
\left|
\frac1N\sum_{h=1}^N
\varphi(|\bar X_i(t)-\bar X_h(t)|)
-
\int \varphi(|\bar X_i(t)-y|)\,\mathrm d\rho_t(y)
\right|.
\end{aligned}
\]
Using the independence of the particles,
\[
\left\|
\frac1N\sum_{h=1}^N
\varphi(|x-\bar X_h|)
-
\int \varphi(|x-y|)\,\mathrm d\rho_t(y)
\right\|_{L^2(f_{\mathrm{in}}^{\otimes N})}
\le \frac{M}{\sqrt N}.
\]
Hence
\[
\
\|\mathcal E_i^{c,1}(t)\|_{L^2(f_{\mathrm{in}}^{\otimes N})}
\le
\frac{D_V^f(t)}{\eta^2\sqrt N}.
\]
Similarly, concerning \(\mathcal{E}_i^{\mathrm c,2}(t)\), the terms in
\[
\frac1N\sum_{j=1}^N
\frac{\varphi(|x-\bar X_j|)}
{\int \varphi(|x-y|)\,\mathrm d\rho_t(y)}
(\bar V_j-v)
\]
are independent, with mean
\(
F[f_t](x,v).
\)
Moreover,
\[
\left|
\frac{\varphi(|\bar X_i-\bar X_j|)}
{\int \varphi(|\bar X_i-y|)\,\mathrm d\rho_t(y)}
(\bar V_j-\bar V_i)
\right|
\le
\frac{D_V^f(t)}{\eta}.
\]
Therefore,
\[
\|\mathcal E_i^{c,2}(t)\|_{L^2(f_{\mathrm{in}}^{\otimes N})}
\le
\frac{D_V^f(t)}{\eta\sqrt N}.
\]
Using the exponential bound in \eqref{in:asymptflkin},
we finally get the thesis.

\bibliographystyle{abbrv}
\bibliography{biblio}
 
\end{document}